\begin{document}
	
	\title[Prime torsion in Tate--Shafarevich groups]{Elements of prime order in Tate--Shafarevich groups of abelian varieties over $\Q$}
	\author{Ari Shnidman and Ariel Weiss}
	\date{}
	
	\subjclass[2020]{11G10, 11G05, 11S25}
	
	\address{Ari Shnidman, Einstein Institute of Mathematics, The Hebrew University of Jerusalem, Edmund J.\ Safra Campus, Jerusalem 9190401, Israel.\vspace*{-3pt}}
	\email{ariel.shnidman@mail.huji.ac.il}
	\address{Ariel Weiss, Department of Mathematics, Ben-Gurion University of the Negev, Be'er Sheva 8410501, Israel.\vspace*{-3pt}}
	\email{arielweiss@post.bgu.ac.il}
	
	\begin{abstract}
		For each prime $p$, we show that there exist geometrically simple abelian varieties $A$ over $\Q$ with $\Sha(A)[p]\ne 0$. Specifically, for any prime $N\equiv 1 \pmod p$, let $A_f$ be an optimal quotient of $J_0(N)$ with a rational point $P$ of order $p$, and let $B = A_f/\langle P \rangle$.  Then the number of positive integers $d \leq X$ with $\Sha(\widehat B_d)[p] \neq 0$ is $ \gg X/\log X$, where $\widehat B_d$ is the dual of the $d$-th quadratic twist of $B$. We prove this more generally for abelian varieties of $\GL_2$-type with a $p$-isogeny satisfying a mild technical condition. In the special case of elliptic curves, we give stronger results, including many examples where $\Sha(E_d)[p] \neq 0$ for an explicit positive proportion of integers $d$.  
	\end{abstract}
	
	\maketitle
	
	\section{Introduction}
	
	Let $A$ be an abelian variety over $\Q$. The Tate--Shafarevich group of $A$ is the abelian group
	\[\Sha(A) = \ker\left(H^1(\Q, A) \to \prod_{\l \leq \infty} H^1(\Q_\l, A)\right)\] 
	classifying $\Q$-isomorphism classes of $A$-torsors that are isomorphic to $A$ over $\Ql$ for every prime $\l$, including the infinite prime $\ell = \infty$.  Much is conjectured, but little is known about the structure of this group.  Famously, the Birch and Swinnerton-Dyer conjecture predicts that $\Sha(A)$ is finite.  On the other hand, heuristics suggest that, for each prime $p$, a positive proportion of elliptic curves $E/\Q$, ordered by height, have $\Sha(E)[p] \neq 0$ \cites{Delaunay, PoonenRains, BKLPR}, and one expects something similar for abelian varieties of higher dimension.
	
	In stark contrast to these expectations, it seems to be an open question whether, for each prime $p$, there exists even a {\it single} elliptic curve over $\Q$ with $\Sha(E)[p] \neq 0$. There are constructions of elliptic curves and higher dimensional abelian varieties $A$ over number fields $K$ with $\Sha(A)[p] \neq 0$ \cite{Kloosterman, ClarkSharif, Creutz}, though the degree of $K$ grows as a function of $p$ in these results. Taking Weil restrictions of these examples gives abelian varieties $A'/\Q$ with $\Sha(A')[p] \neq 0$.  However, there again seem to be no known examples of {\it geometrically simple} abelian varieties $A/\Q$ with $\Sha(A)[p] \neq 0$, for large primes $p$.  The purpose of this paper is to provide such examples.  
	
	\begin{introtheorem}\label{thm:mainthm}
		For each prime $p$, there exists a geometrically simple abelian variety $A/\Q$ such that $\Sha(A)[p]\ne 0$. 
	\end{introtheorem}
	
	In fact, for each $p$, we exhibit infinitely many such $A/\Q$ in distinct $\overline \Q$-isogeny classes. Our examples arise from optimal quotients $A_f$ of the modular Jacobian $J_0(N)$, attached to weight two newforms $f \in S_2(\Gamma_0(N))$ of prime level $N$.
	
	\begin{introtheorem}\label{thm:primetwists}
		Let $N$ be a prime and let $p \geq 3$ be a prime divisor of $(N-1)/\gcd(12,N-1)$. Let $A_f$ be any optimal  quotient of $J_0(N)$ containing a point $P \in A_f(\Q)$ of order $p$. Let $B = A_f/\langle P \rangle$, let $\phi\:A_f\to B$ be the canonical isogeny, and let $\widehat\phi \colon \widehat B \to \widehat A_f$ be the dual isogeny. Then,
		\[\#\{d : 0 < |d| \leq X  \mbox{ and } \Sha(\widehat B_d)[p] \neq 0\} \gg \dfrac{X}{\log X},\]
		where $\widehat B_d$ is the $d$-th quadratic twist of $\widehat B$, for each $d \in \Z$.
	\end{introtheorem}
	
	\Cref{thm:mainthm} follows immediately from \Cref{thm:primetwists}. Indeed, the abelian varieties $A_f$ and $\widehat B_d$ are geometrically simple \cite[Cor.\ 1.4]{Ribet-semistable} and, for any prime $p$ dividing $\frac{N-1}{(12,N-1)}$, there exists at least one newform $f \in S_2(\Gamma_0(N))$ such that $A_f(\Q)$ contains a point of order $p$ \cite[Thm.\ B]{Emerton}. Hence, given a prime $p$, it suffices to apply \Cref{thm:primetwists} to any  prime $N\equiv 1\pmod p$. Moreover, Dirichlet's theorem on primes in arithmetic progressions guarantees that there are infinitely many such $N$. Since $N$ is prime, $J_0(N)$ is semistable with conductor a power of $N$, and we see that different choices of $N$ give geometrically non-isogenous abelian varieties $A/\Q$ with $\Sha(A)[p]\ne 0$.
	
	The dimensions of the abelian varieties $A$ we produce grow with $p$. Indeed, the Weil conjectures imply that $\dim A_f \geq \frac{\log p}{2\log(1+\sqrt 2)}$ \cite[Prop.\ 7.2]{Mazur-Eisenstein}; in particular, $A_f$ is not an elliptic curve if $p \geq 7$. One could give a very crude {\it upper bound} for the minimal dimension of an $A$ with $\Sha(A)[p] \neq 0$, by combining bounds for the smallest prime $N\equiv 1\pmod p$ with well-known bounds on the dimension of $J_0(N)$. 
	
	Our proof of \Cref{thm:primetwists} is fairly short, but uses several deep inputs. First, we prove that if $\phi\:A\to B$ is an $p$-isogeny of abelian varieties, whose Selmer ratio $c(\phi)$ equals $p^i$ (see \Cref{def:selmer-ratio}), then there exists an explicit, positive density set of squarefree integers $\Sigma^+$, such that $\rk A_d(\Q) + \dim_{\F_p} \Sha(A_d)[p] \ge i$ for all but finitely many quadratic twists $A_d$ with $d\in\Sigma^+$ (\Cref{prop:sha}). Our proof uses techniques from Galois cohomology, in particular, the Greenberg--Wiles formula.
	
	In the case that $A=A_f$ is an optimal quotient of $J_0(N)$, we then invoke non-vanishing results for special values of $L$-functions, due to Bump--Friedberg--Hoffstein and Ono--Skinner \cite{BFH,ono-skinner}, to show that $L(f_d,s) \neq 0$, for many quadratic twists $f_d$ of $f$.  By work of Gross--Zagier and Kolyvagin--Logachev \cite{GrossZagier, KolyvaginLogachev} or Kato \cite{Kato}, we have $\rk \widehat B_d(\Q) = 0$ for such $d$. It follows that, if $A=A_f$ is an optimal quotient of $J_0(N)$ that admits a $p$-isogeny $\phi \colon A \to B$ such that $c(\phi)\ge p^2$, then $\Sha(A_d)[p]\ne 0$ for infinitely many quadratic twists $A_d$ (\Cref{thm:generaltwists}).  
	
	In \Cref{sec:proof}, we specialize to prime $N$, where we use Mazur's study of the Eisenstein ideal \cite{Mazur-Eisenstein, Emerton} to show that the condition on $c(\phi)$ is always satisfied, thereby proving \Cref{thm:primetwists}. Our computation of $c(\phi)$ is a generalization of \cite[\S 6]{shnidmanRM}, which was for the prime $p = 3$. 

	In the case of elliptic curves, we prove an even stronger bound than the one in \Cref{thm:primetwists}, by invoking recent work of Smith \cite{Smith-thesis} instead of results on $L$-functions. In \Cref{sec:elliptic-proof} we prove: 
	
	\begin{introtheorem}\label{thm:elliptic}
		Let $E$ be an elliptic curve over $\Q$ with a degree $p$ isogeny $\phi\:E \to E'$, for some prime $p \geq 3$.  Assume that $c(\phi) \geq p^2$ and that $E[2](\Q) \neq \Z/2\Z$. Then for a positive proportion of squarefree integers $d$, we have $\Sha(E_d)[p] \neq 0$. 
	\end{introtheorem}
	
	The  hypothesis $c(\phi)\ge p^2$ applies to ``most'' quadratic twist families of $p$-isogenies of elliptic curves with $p \ge 3$, in a certain sense (see \Cref{prop:atleast3}).
	
	For each elliptic curve $E$ in \Cref{thm:elliptic}, we can give an explicit lower bound on the proportion of $d$ such that $\Sha(E_d)[p]\ne 0$. Sometimes these bounds are larger than those predicted by a na\"ive generalization of the heuristics of Delaunay \cite{delaunay2} and Poonen--Rains \cite{PoonenRains}. For example, in \Cref{sec:elliptic}, we prove that, for the elliptic curve $E\:y^2 +y = x^3-x^2-7820x-263580$ with LMFDB label 11.a1, we have $\Sha(E_d)[5]\ne 0$ for at least $22.9\%$ of squarefree integers $d$. This example shows that the heuristics of Delaunay and Poonen--Rains on distributions of $p$-Selmer groups need to be modified when applied to quadratic twist families of elliptic curves with a rational $p$-isogeny.  
	
	In \cite[Conj.\ 1]{bkls2}, Bhargava, Klagsbrun, Lemke Oliver, and the first author state a conjecture that, for any abelian variety $A/\Q$, we should have $\Sha(A_d)[p] \neq 0$ for a positive proportion of squarefree integers $d$.  They prove special cases of this conjecture, without invoking Smith's work, when one of $E$ or $E'$ admits an additional $3$-isogeny, in addition to a $p$-isogeny.  In general, most known systematic constructions of elements in $\Sha(A)[p]$ over $\Q$ exploit either multiple isogenies or the Cassels--Tate pairing; see e.g.\ \cite{CasselsVI, Fisher, Flynn, shnidmanRM,BhargavaHoII, BFS}.
	One exception is a theorem of Balog--Ono \cite[Thm.\ 2]{BalogOno}, which applies to a large class of elliptic curves $E/\Q$ with a point of order $p$.  As with our proof of Theorem \ref{thm:primetwists}, their proof relies on non-vanishing results for $L$-functions, but to prove $\Sel_p(E_d) \neq 0$, they instead use non-vanishing results for class groups. This leads to the weaker bound $\{d : 0 < |d| \leq X, \, \Sha(E_d)[p] \neq 0\} \gg X^{\frac12 + \frac{1}{2p}}/\log^2X$.
	Thus, even in the special case of elliptic curves, we improve significantly on the known quantitative results, whenever our method applies.  
	
	\section{Selmer groups of abelian varieties with a $p$-isogeny}

	Let $\phi \colon A \to B$ be an isogeny of abelian varieties over $\Q$.
	
	\subsection{Selmer groups and the Selmer ratio}

	\begin{definition}
	    The $\phi$-Selmer group is 
	\[\Sel(\phi) = \ker\left(H^1(\Q, A[\phi]) \to \prod_{\l \leq \infty} H^1(\Ql, A)\right).\] 
	In the special case $A = B$ and $\phi = [p]_A$, we write $\Sel_p(A)$ instead of $\Sel([p]_A)$. 
	\end{definition}
	
	\begin{definition}\label{def:selmer-ratio}
		For $\l$ a finite or infinite prime, define the \emph{local Selmer ratio}
		\[c_\l(\phi) = \dfrac{\#\coker(A(\Ql) \to B(\Ql))}{\#\ker(A(\Ql) \to B(\Ql))}.\]
		When $\l = \infty$, we use the convention that $\Q_\infty = \R$. We then define the \emph{global Selmer ratio} \[c(\phi) = \prod_{\l \leq \infty} c_\l(\phi).\]
	\end{definition}
	These Selmer ratios were defined in \cite{elk}, but were already studied in \cite{CasselsVIII} under a slightly different guise. The notation is meant to recall the Tamagawa number $c_\ell(X)$ of an abelian variety $X$ over $\Q_\ell$.  Indeed, the following lemma shows that, for all but finitely many primes $\ell$, we have $c_\ell(\phi) = c_\ell(B)/c_\ell(A)$.

	\begin{lemma}\label{lem:schaefer}
		For any finite prime $\ell$, we have
		\[c_\l(\phi) = \frac{c_\l(B)}{c_\l(A)}\gamma_{\phi, \l}, \]
		where $\gamma_{\phi, \l}\ii$ is the normalized absolute value of the determinant of the map $\mathrm{Lie}(\mathcal{A}) \to \mathrm{Lie}(\mathcal{B})$ on tangent spaces of the N\'eron models over $\Z_\ell$. In particular, if $\l\nmid \deg(\phi)$, then $\gamma_{\phi, \l}=1$.
	\end{lemma}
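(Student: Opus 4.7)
The plan is to extend $\phi$ to N\'eron models over $\Z_\ell$ and exploit the canonical three-step filtration of $\ell$-adic points. Let $\mathcal{A}$ and $\mathcal{B}$ denote the N\'eron models, so that $A(\Q_\ell)=\mathcal{A}(\Z_\ell)$, $B(\Q_\ell)=\mathcal{B}(\Z_\ell)$, and $\phi$ extends canonically to a homomorphism $\mathcal{A}\to\mathcal{B}$. Write $\mathcal{A}^0\subset\mathcal{A}$ for the identity component, $\widehat{\mathcal{A}}$ for the formal group along the zero section, and $\Phi_A$ for the component group $\mathcal{A}_{\F_\ell}/\mathcal{A}^0_{\F_\ell}$, so that $c_\ell(A)=\#\Phi_A(\F_\ell)$; analogous notation for $\mathcal{B}$. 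The short exact sequences
\[
0\to\widehat{\mathcal{A}}(\ell\Z_\ell)\to\mathcal{A}^0(\Z_\ell)\to\mathcal{A}^0_{\F_\ell}(\F_\ell)\to 0,\qquad 0\to\mathcal{A}^0(\Z_\ell)\to\mathcal{A}(\Z_\ell)\to\Phi_A(\F_\ell)\to 0
\]
are respected by $\phi$, and similarly for $\mathcal{B}$.

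I would then set $\chi(f):=\#\coker(f)/\#\ker(f)$ and invoke the snake lemma to conclude that $\chi$ is multiplicative along this filtration. This reduces $c_\ell(\phi)=\chi(\phi\colon A(\Q_\ell)\to B(\Q_\ell))$ to a product of three factors. The component-group factor equals $\#\Phi_B(\F_\ell)/\#\Phi_A(\F_\ell)=c_\ell(B)/c_\ell(A)$ by a direct order count. The identity-component factor equals $1$: the map $\mathcal{A}^0_{\F_\ell}\to\mathcal{B}^0_{\F_\ell}$ is between smooth connected commutative algebraic groups of equal dimension, and one gets $\chi=1$ on $\F_\ell$-points by passing through the Chevalley decomposition and appealing to Lang's theorem for the abelian and toric pieces, together with a direct Frobenius-trace count for the unipotent piece. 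For the formal-group factor, if $\ell\neq p$ then both sides are pro-$\ell$, so the $p$-isogeny $\phi$ acts invertibly on them and $\chi=1=\gamma_{\phi,\ell}$; this simultaneously verifies the final sentence of the lemma. If $\ell=p$, the hypothesis that $p$ is odd ensures that the formal logarithm gives an isomorphism of $\Z_p$-modules from $\widehat{\mathcal{A}}(p\Z_p)$ onto $p\,\mathrm{Lie}(\mathcal{A})$ intertwining $\phi$ with $\mathrm{Lie}(\phi)$. Since $\chi$ of a $\Z_p$-linear map of free $\Z_p$-modules of equal rank is the reciprocal of the normalized absolute value of its determinant, this factor equals $\gamma_{\phi,p}$.

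The main obstacle I anticipate is the identity-component step, as one must confirm $\chi=1$ uniformly across the Chevalley factors of $\mathcal{A}^0_{\F_\ell}$ when $A$ has mixed bad reduction, and the snake lemma bookkeeping must account for any failure of surjectivity of $\phi$ on identity components. The second delicate point is the formal-logarithm identification at $\ell=p$, where both convergence and the precise image lattice rely essentially on $p$ being odd; for $p=2$ one would need to replace $p\Z_p$ by a deeper subgroup and track an additional index correction.
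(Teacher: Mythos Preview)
The paper does not supply its own proof of this lemma: it is quoted verbatim as \cite{schaefer}*{Lem.\ 3.8} and used as a black box. Your sketch is essentially the argument one finds in Schaefer's paper---filter $A(\Q_\ell)=\mathcal{A}(\Z_\ell)$ by the formal group and the identity component, use the multiplicativity of $\chi(f)=\#\coker(f)/\#\ker(f)$ via the snake lemma, and evaluate each graded piece---and it is correct. A couple of small remarks on the points you flagged as delicate: surjectivity of $\phi\colon\mathcal{A}^0_{\F_\ell}\to\mathcal{B}^0_{\F_\ell}$ follows because the image is a closed connected subgroup of the same dimension, and then $\chi=1$ on $\F_\ell$-points drops out of Lang's theorem ($H^1(\F_\ell,\mathcal{A}^0_{\F_\ell})=0$) together with $\#H^0(\F_\ell,K)=\#H^1(\F_\ell,K)$ for the finite kernel $K$, so one does not strictly need to pass through the Chevalley decomposition. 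For the ``in particular'' clause, one can also argue directly on Lie algebras: choosing $\psi$ with $\psi\phi=[p]$ gives $\det\mathrm{Lie}(\psi)\cdot\det\mathrm{Lie}(\phi)=p^g$, and since both determinants lie in $\Z_\ell\setminus\{0\}$ their product being an $\ell$-adic unit for $\ell\neq p$ forces each to be a unit, whence $\gamma_{\phi,\ell}=1$.
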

	
	\begin{proof}
	This lemma is \cite{schaefer}*{Lem.\ 3.8}. Recall that $c_\ell(A) = \#A(\Q_\ell)/A_0(\Q_\ell)$, where $A_0(\Q_\ell)$ is the subgroup of points which reduce to the identity component in the special fiber of the N\'eron model $\mathcal{A}$ of $A$.    
	\end{proof}
	
	The local Tamagawa numbers $c_\ell(A), c_\ell(B)$ are equal to 1 for all primes $\ell$ of good reduction. Hence, we have $c_\ell(\phi) = 1$ for all but finitely many primes, so the global Selmer ratio $c(\phi) = \prod_\ell c_\ell(\phi)$ is well-defined. Moreover, if $\phi$ has prime degree $p$, then $c(\phi)$ is an integer power of $p$.

	\subsection{Lower bounds on Selmer groups}
	
	Let $N$ be the radical of the conductor of $A$. Thus, a prime $\ell$ divides $N$ if and only if $A$ has bad reduction at $\ell$. 
	
	Let $\Sigma^+$ be the set of positive squarefree integers $d$, such that $d \in \Zl^{\times2}$ for all primes $\l\mid pN$. For any squarefree $d\in \Z$, write $\phi_d\:A_d\to B_d$ for the $d$-th quadratic twist of $\phi$, which again has degree $p$.
	
	\begin{theorem}\label{prop:sha}
		Suppose that $\phi\colon A\to B$ is a degree $p$ isogeny, and write $c(\phi) = p^i$, for some $i \in \Z$. Then for all but finitely many $d \in \Sigma^+$, we have $\dim_{\Fp}\Sel_p(A_d)\geq i$, and hence 
		\[\rk A_d(\Q) + \dim_{\F_p} \Sha(A_d)[p] \geq i.\]
	\end{theorem}
	
	The proof will require several lemmas. 
	
	\begin{lemma}\label{lem:boringprimes}
		If $\l\nmid pN\infty$, then $c_\l(\phi_d) = 1$ for all non-zero $d\in \Z$.
	\end{lemma}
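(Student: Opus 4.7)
\emph{Proof plan.} The strategy is to combine Schaefer's lemma with a twist analysis of Tamagawa numbers. By \Cref{lem:schaefer} applied to $\phi_d$, and the fact that $\ell\ne p$ forces $\gamma_{\phi_d,\ell}=1$, we have
\[
c_\ell(\phi_d)=c_\ell(B_d)/c_\ell(A_d).
\]
Moreover, $c_\ell(\phi_d)$ is always a power of $p$: the kernel $A_d[\phi_d](\Q_\ell)$ has order dividing $p=\deg\phi_d$, while the cokernel $B_d(\Q_\ell)/\phi_d A_d(\Q_\ell)$ is annihilated by $p$, since the complementary isogeny $\psi_d\colon B_d\to A_d$ satisfying $\psi_d\circ\phi_d=[p]_{A_d}$ (which exists because $A_d[\phi_d]\subseteq A_d[p]$) also gives $\phi_d\circ\psi_d=[p]_{B_d}$, and hence $pB_d(\Q_\ell)\subseteq\phi_d A_d(\Q_\ell)$. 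Thus it suffices to show $v_p(c_\ell(A_d))=v_p(c_\ell(B_d))$.

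Since $\ell\nmid N$, both $A$ and $B$ have good reduction at $\ell$, so $A_d,B_d$ acquire good reduction over the (at most quadratic) extension $K:=\Q_\ell(\sqrt{d})$. If $K/\Q_\ell$ is unramified or trivial, which is the case whenever $\ell\nmid d$, then $A_d$ and $B_d$ themselves have good reduction at $\ell$, so $c_\ell(A_d)=c_\ell(B_d)=1$. Otherwise $K/\Q_\ell$ is a ramified quadratic extension, and I would invoke Grothendieck's description (SGA~7) of the $p$-primary part of the N\'eron component group as a quotient of the $I_\ell$-coinvariants of the $p$-adic Tate module. Since $T_pA_d\cong T_pA\otimes\chi_d$ and $I_\ell$ acts on $T_pA$ trivially (good reduction) while $\chi_d|_{I_\ell}$ has order $2$, inertia acts on $T_pA_d$ through $\pm 1$. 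For odd $p$, multiplication by $2$ is invertible on $T_pA_d$, forcing $(T_pA_d)_{I_\ell}=0$ and hence $\Phi_{A_d}[p^\infty]=0$; the same holds for $\Phi_{B_d}$, giving $v_p(c_\ell(A_d))=v_p(c_\ell(B_d))=0$.

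Combining these two inputs, $c_\ell(\phi_d)$ is simultaneously a power of $p$ and coprime to $p$, and so equals $1$. The most delicate step is the invocation of Grothendieck's formula when $\ell=2$, where the quadratic twist extension is wildly ramified; one expects the same monodromy-based argument to produce the vanishing of the $p$-part of the component group in this case as well, but this requires some care to verify.
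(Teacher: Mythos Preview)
Your argument is correct in outline but takes a substantially heavier route than the paper. Both proofs dispose of the unramified-twist case identically via Schaefer's lemma. For the ramified case, however, the paper bypasses Tamagawa numbers and N\'eron component groups entirely: it notes that $A_d[\phi_d]\simeq A[\phi]\otimes_{\F_p}\chi_d$ carries a \emph{ramified}, hence nontrivial, $G_{\Q_\ell}$-character $\widetilde\chi_d$, so that $H^0(\Q_\ell,\widetilde\chi_d)=0$; then local Tate duality together with the Euler characteristic formula give $\#H^1(\Q_\ell,\widetilde\chi_d)=\#H^0(\Q_\ell,\widetilde\chi_d)\cdot\#H^0(\Q_\ell,\widetilde\chi_d^{-1}\epsilon)=1$ (the cyclotomic character $\epsilon$ being unramified at $\ell\ne p$, so $\widetilde\chi_d^{-1}\epsilon$ is still ramified and nontrivial). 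Since the image of the Kummer map lands in this trivial $H^1$, the numerator and denominator of $c_\ell(\phi_d)$ are both $1$. This computation is uniform in $\ell$ and uses nothing beyond standard local duality.

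Your approach, by contrast, reduces to showing $\Phi_{A_d}[p^\infty]=\Phi_{B_d}[p^\infty]=0$. This is true, but the cleanest justification goes through \emph{invariants} of the $p$-divisible group rather than coinvariants of the Tate module: for $p\ne\ell$, reduction identifies $A_d[p^n]^{I_\ell}$ with $\mathcal{A}_{d,s}[p^n](\bar\F_\ell)$, which surjects onto $\Phi_{A_d}[p^n]$ since the identity component is $p$-divisible; and inertia acting through $-1$ on a $p$-group with $p$ odd has no fixed points. Note in particular that your concern about $\ell=2$ is unwarranted: nothing here uses tameness, only that $I_\ell$ acts through a quotient of order $2$. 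So your strategy can be completed, but the paper's two-line Galois-cohomology computation is considerably lighter than an appeal to N\'eron-model structure theory, and it leaves no residual case to worry about.
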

	
	\begin{proof}
		Let $\chi_d\:\GaQl\to \F_p^{\times}$ denote the character corresponding to the (possibly trivial) extension $\Ql(\sqrt{d})/\Ql$.  If $\chi_d$ is unramified, then $A_d$ has good reduction over $\Ql$, and $c_\l(\phi_d) = c_\l(B_d)/c_\l(A_d) = 1$ by \Cref{lem:schaefer}. Assume now that $\chi_d$ is ramified (and in particular, non-trivial). Since $A$ has good reduction over $\Ql$, the extension $\Ql(A[\phi])/\Ql$ is unramified. Hence, the $\GaQl$-action on $A_d[\phi_d]\simeq A[\phi] \otimes_{\F_p} \chi_d$ is via a non-trivial character $\widetilde\chi_d$. Thus  
		\[c_\l(\phi_d) = \frac{\#\im (B_d(\Ql)\to H^1(\Ql, \widetilde\chi_d))}{\#H^0(\Ql, \widetilde\chi_d)}.\]
		The denominator is 1 since $\widetilde\chi_d$ is non-trivial. Let $\epsilon\:\GaQl\to \F_p^{\times}$ be the mod $p$ cyclotomic character, which is unramified. We have $\# H^1(\Ql, \widetilde\chi_d) = \# H^0(\Ql,\widetilde\chi_d)\#H^0(\Ql, \widetilde\chi_d^{-1}\epsilon) = 1$, by local Tate duality \cite{Milne-ADT}*{Cor.\ I.2.3} and the Euler characteristic formula \cite{Milne-ADT}*{Thm.\ I.2.8}. Hence, $c_\l(\phi_d)=1$.
	\end{proof}
	
	\begin{lemma}\label{lem:infty}
		We have $c_\infty(\phi) = \#A[\phi](\R)\ii$.
	\end{lemma}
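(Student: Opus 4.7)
My plan is to read off $c_\infty(\phi)$ from the long exact sequence in $\mathrm{Gal}(\C/\R)$-cohomology attached to the short exact sequence $0\to A[\phi](\C)\to A(\C)\xrightarrow{\phi} B(\C)\to 0$ of $\mathrm{Gal}(\C/\R)$-modules (exact on the right because $\phi$ is surjective on $\C$-points). By definition,
\[
c_\infty(\phi)=\frac{\#\coker(A(\R)\to B(\R))}{\#\ker(A(\R)\to B(\R))},
\]
and the denominator is exactly $\#A[\phi](\R)$ since $A[\phi]$ is the kernel of $\phi$. It therefore suffices to show that $A(\R)\to B(\R)$ is surjective.

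For this, I would use the portion
\[
A(\R)\to B(\R)\to H^1(\R,A[\phi])
\]
of the long exact sequence, which identifies the cokernel with a subgroup of $H^1(\R,A[\phi])$. The key input is that $A[\phi]$ is a finite group of order a power of the odd prime $p$, while $\mathrm{Gal}(\C/\R)$ has order $2$. By the standard vanishing theorem for group cohomology with coefficients in a module whose order is coprime to the order of the group, this forces $H^1(\R,A[\phi])=0$, so that $A(\R)\to B(\R)$ is surjective and the cokernel is trivial.

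Combining the two observations yields $c_\infty(\phi)=\#A[\phi](\R)\ii$. There is essentially no obstacle here; the only subtlety worth flagging is that the argument genuinely uses $p$ odd, since for $p=2$ the group $H^1(\R,A[\phi])$ can be non-zero and one would instead have to analyze the real component groups of $A$ and $B$ directly.
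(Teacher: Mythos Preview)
Your argument is correct and is essentially identical to the paper's: both identify the cokernel of $A(\R)\to B(\R)$ with a subgroup of $H^1(\mathrm{Gal}(\C/\R),A[\phi])$, observe that this $H^1$ vanishes because $\#A[\phi]$ is odd while $\mathrm{Gal}(\C/\R)$ has order $2$, and conclude that $c_\infty(\phi)=\#A[\phi](\R)^{-1}$.
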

	
	\begin{proof}
		We have 
		\[\#\coker(A(\R)\to B(\R)) = \#\im (B(\R)\to H^1(\Gal(\C/\R), A[\phi])).\] 
		Since $\#A[\phi] = p$ is odd, we have $H^1(\Gal(\C/\R), A[\phi]) = 0$. Hence, \[c_\infty(\phi) = \#\coker(A(\R)\to B(\R))\cdot\#A[\phi](\R)\ii=\#A[\phi](\R)\ii.\]
	\end{proof}

	To prove \Cref{prop:sha}, we will use the Greenberg--Wiles formula \cite[Thm.\ 8.7.9]{NSW:cohomologyofnumberfields}, which is a consequence of Poitou--Tate duality. If $\psi \colon X \to Y$ is an isogeny, it relates the size of $\Sel(\psi)$ to the size of $\Sel(\widehat\psi)$, where $\widehat \psi \colon \widehat Y \to \widehat X$ is the dual isogeny. 
	Applied to the isogeny $\phi_d \colon A_d \to B_d$, it reads
	\begin{equation}\label{eq:GreenbergWiles}
		c(\phi_d) =  \dfrac{\#\Sel(\phi_d)}{\#\Sel(\widehat\phi_d)} \cdot \dfrac{\#\widehat B_d[\widehat\phi_d](\Q)}{\# A_d[\phi_d](\Q)}.
	\end{equation}
	
	\begin{proof}[Proof of Theorem $\ref{prop:sha}$]
	    First note that, if $A$ is any abelian variety and $p>2$, then $A_d[p](\Q) = 0$ for all but finitely many quadratic twists of $A$.  Indeed, if $0 \neq P \in A_d[p](\Q)$, then the Galois module $A[p]$ has a subrepresentation isomorphic to the quadratic character $\chi_d \colon \Ga\Q \to \F_p^\times$ that cuts out the extension $\Q(\sqrt d)/\Q$.  Since $A[p]$ is finite-dimensional, there can only be finitely many such $d$. As a consequence, if $\phi\:A\to B$ is a $p$-isogeny, then $A[\phi]\subset A[p]$, and we see that $A_d[\phi_d](\Q) = 0$ for all but finitely many $d$.
	    
	    We may therefore ignore the finitely many $d\in\Sigma^+$ such that $\#A_d[\phi_d](\Q)\#\widehat B_d[\widehat\phi_d](\Q) \ne 1$. Hence, from $(\ref{eq:GreenbergWiles})$, we have
	    \[\#\Sel(\phi_d) = c(\phi_d)\#\Sel(\widehat\phi_d)\ge c(\phi_d).\]
	    Now, if $d\in \Sigma^+$, then $\phi_d = \phi$ over $\Q_\ell$, for all primes $\l\mid pN\infty$. Hence, by \Cref{lem:boringprimes}, we have $c(\phi_d) = c(\phi) =p^i$ for all $d\in \Sigma^+$. It follows that
	    \[\dim_{\Fp}\Sel(\phi_d)\ge  i\]
	    for all but finitely many $d\in \Sigma^+$.
	    
	    Finally, we note that, for all but finitely many $d\in \Sigma^+$, the inclusion $A_d[\phi_d]\to A_d[p]$ induces an injection 
	    \[\Sel(\phi_d)\hookrightarrow\Sel_p(A_d).\]
	    Indeed, by \cite[(9.1)]{bkls}, the kernel of this map is $B_d[\psi_d](\Q)/\phi_d(A_d[p](\Q))$, where $\psi_d \colon B_d \to A_d$ is the isogeny such that $\psi_d \circ \phi_d = [p]$. As before, this kernel vanishes for all but finitely many $d$.  Hence, for such $d$, we have $\dim_{\Fp}\Sel_p(A_d)\ge  i$, and the exact sequence
		\[0\to A_d(\Q)/pA_d(\Q) \to \Sel_p(A_d)\to \Sha(A_d)[p]\to 0,\]
		implies that $\rk A_d(\Q) + \dim_{\F_p} \Sha(A_d)[p] \geq i$.
	\end{proof}

	\section{Quotients of $J_0(N)$ with a $p$-isogeny}\label{sec:general}

	For $N \geq 1$, let $J_0(N)$ be the Jacobian of the modular curve $X_0(N)$ over $\Q$. 
	
	\begin{theorem}\label{thm:generaltwists}
		Let $A$ be a simple abelian variety over $\Q$ arising as a quotient of $J_0(N)$, for some integer $N\ge 1$. Assume that $A$ admits a degree $p$ isogeny $\phi \colon A \to B$ over $\Q$, for some prime $p \geq 3$, and that $c(\phi) \geq p^2$. Then   
		\[\#\{d : 0 < |d| \leq X  \mbox{ and } \Sha(A_d)[p] \neq 0\} \gg_A \dfrac{X}{\log X}.\]
	\end{theorem}

	\begin{proof}
		Let $M\mid N$ be the minimal positive integer such that $A$ is a quotient of $J_0(M)$. Then there is a newform $f = \sum a_nq^n\in S_2(\Gamma_0(M))$, such that the coefficient field $E := \Q(\{a_n\})$ is isomorphic to $\End_\Q(A) \otimes \Q$ and $L(A, s) = \prod_{\sigma\in \Hom(E,\C)} L(f^\sigma,s)$. Since $J_0(M)$ has good reduction at all primes $\l \nmid M$, so does $A$.
		
		Let $\Sigma$ be the set of squarefree integers $d$, such that $d\in \Zl^{\times 2}$ for all primes $\l\mid pM$. By \cite[Cor.\ 3]{ono-skinner}, we have
		\[\#\{d\in \Sigma : 0<|d|\leq X \text{ and } L( f_d,1) \ne 0\} \gg \dfrac{X}{\log X},\]
		where $f_d$ is the $d$-th quadratic twist of $f$.  Moreover, by \cite[Cor.\ 14.3]{Kato}, or alternatively by \cite{GrossZagier, KolyvaginLogachev} and \cite[Thm.\ 1]{BFH}, we have $\rk A_d(\Q) = 0$ whenever $L( f_d,1) \ne 0$.
		
		By \Cref{lem:boringprimes} and by the assumption that $d\in \Zl^{\times 2}$ for all primes $\l\mid pM$, we have  $c_\ell(\phi_d) = c_\ell(\phi)$ for all finite primes $\l$ and all $d \in \Sigma$. If $d>0$, we therefore have $c(\phi_d) = c(\phi) \geq p^2$. If $d< 0$ we have $c_\infty(\phi_d)/c_\infty(\phi) \in \{p, p^{-1}\}$ by \Cref{lem:infty}, and so $c(\phi_d) \geq c(\phi)/p \geq p$. Thus, we have $c(\phi_d) \geq p$ for all $d \in \Sigma$. Applying \Cref{prop:sha} both to $A$ and to $A_{-1}$, we have
		\[\rk A_d(\Q) + \dim_{\Fp}\Sha(A_d) \ge 1\]
		for all but finitely many $d\in \Sigma$. It follows that
		\[\#\{d \in \Sigma : 0 < |d| \leq X  \mbox{ and } \Sha(A_d)[p] \neq 0\} \gg \dfrac{X}{\log X},\]
		as desired.
    \end{proof}
	
	\section{Quotients of $J_0(N)$ with $N$ prime}\label{sec:proof}
	
		Let $N$ be a prime and let $p \geq 3$ be a divisor of $\frac{N-1}{\mathrm{gcd}(12,N-1)}$. Let $\T$ be the finite $\Z$-algebra generated by the Hecke operators acting on the space $S_2(\Gamma_0(N))$ of weight $2$ cusp forms on $\Gamma_0(N)$. For each newform $f\in S_2(\Gamma_0(N))$, let $\lambda_f \colon \T\to\C$ be the homomorphism giving the action of the Hecke operators on $f$, and let $I_f=\ker\lambda_f$. 
	Let $J = J_0(N)$ be the modular Jacobian. Then $\T \hookrightarrow \End_\Q J$, and $A_f := J/I_fJ$ is an abelian variety over $\Q$, called the \emph{optimal quotient} corresponding to $f$ \cite{Emerton}. 
	
	By \cite[Thm.\ 1]{Mazur-Eisenstein}, the torsion subgroup $J_0(N)(\Q)_{\mathrm{tors}}$ is cyclic of order $\frac{N-1}{\gcd(12,N-1)}$, and hence is divisible by $p$. By \cite[Thm.\ B]{Emerton}, there exists at least one optimal quotient $A = A_f$ with a point $P \in A(\Q)$ of order $p$. Let $B = A/\langle P\rangle$ be the quotient, let $\phi\:A\to B$ be the canonical $p$-isogeny over $\Q$, and let $\widehat\phi\:\widehat B\to\widehat A$ be the dual isogeny.
	
	\begin{proof}[Proof of Theorem $\ref{thm:primetwists}$]
		By \Cref{thm:generaltwists}, it is enough to prove that $c(\widehat\phi) = p^2$. By the Greenberg--Wiles formula $(\ref{eq:GreenbergWiles})$, it is equivalent to show that $c(\phi) = p^{-2}$.  By \Cref{lem:boringprimes}, we have $c_\ell(\phi) = 1$ whenever $\l\notin \{p, N, \infty\}$. Moreover, by \Cref{lem:infty}, we have $c_\infty(\phi) = \#\ker(\phi)(\R)\ii = p^{-1}$. To compute the remaining two local Selmer ratios, we use some facts about the N\'eron model of $A$.
		
		\begin{lemma}\label{lem:ell=p}
			We have $c_p(\phi) = 1$.  
		\end{lemma}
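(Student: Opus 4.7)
The plan is to decompose $c_p(\phi)$ via Schaefer's formula (\Cref{lem:schaefer}) and show each factor is trivial. Since $A$ is a quotient of $J_0(N)$ with $N$ prime, it has good reduction at every prime $\ell\ne N$; as $p$ divides $N-1$ we have in particular $p\ne N$, so both $A$ and $B$ acquire good reduction at $p$. Consequently the component groups of their N\'eron models vanish, giving $c_p(A)=c_p(B)=1$, so \Cref{lem:schaefer} reduces the claim to $\gamma_{\phi,p}=1$.

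The main step is thus to compute the determinant of the induced map on Lie algebras of the N\'eron models $\mathcal{A},\mathcal{B}$ over $\Z_p$. I would argue that $\ker\phi=\langle P\rangle$ has \'etale closure in $\mathcal{A}$: because $P\in A(\Q)\subset A(\Q_p)$, the N\'eron mapping property extends $P$ uniquely to a section of $\mathcal{A}$, so the scheme-theoretic closure of $\langle P\rangle$ in $\mathcal{A}$ is the constant group scheme $\underline{\Z/p\Z}_{\Z_p}$, which is \'etale over $\Z_p$. The quotient $\mathcal{A}\to\mathcal{A}/\underline{\Z/p\Z}$ is therefore \'etale, and by N\'eron uniqueness it coincides with the extension of $\phi$ to the N\'eron model $\mathcal{B}$ of $B$. \'Etaleness forces the induced map $\mathrm{Lie}(\mathcal{A})\to\mathrm{Lie}(\mathcal{B})$ to be an isomorphism, so its determinant is a unit in $\Z_p$ and $\gamma_{\phi,p}=1$.

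No real obstacle is expected: the entire argument hinges on $P$ being $\Q$-rational (so the kernel of $\phi$ is constant, hence \'etale, at $p$) and on $p\ne N$ (so $A$ has good reduction at $p$).
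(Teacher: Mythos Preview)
Your overall strategy is sound and in fact more direct than the paper's. The paper does not argue abstractly from the rationality of $P$; instead it uses Emerton's Theorem~B to identify $P$ as the image of a rational cuspidal divisor on $J_0(N)$, and then cites Mazur \cite[II.11.11]{Mazur-Eisenstein} to deduce that $\phi$ extends to an \'etale isogeny of N\'eron models over $\Z_p$. Your argument bypasses these modular-curve inputs entirely and would apply to any abelian variety over $\Q_p$ with good reduction and a rational point of odd prime order $p$.

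There is, however, one step that is not justified. From the N\'eron mapping property you obtain sections $0,P,2P,\ldots,(p-1)P$ of $\mathcal{A}$ over $\Z_p$, hence a homomorphism $\underline{\Z/p\Z}_{\Z_p}\to\mathcal{A}$; but this map is a closed immersion (equivalently, the scheme-theoretic closure of $\langle P\rangle$ is the constant group scheme) only if these sections are pairwise disjoint on the special fibre, i.e.\ only if $P$ does \emph{not} reduce to the identity in $\mathcal{A}(\F_p)$. This is exactly what must be checked, and the N\'eron mapping property alone does not give it. The fix is standard: since $p\ge 3$ and $A$ has good reduction, the kernel of reduction $\ker\bigl(\mathcal{A}(\Z_p)\to\mathcal{A}(\F_p)\bigr)$ is the formal group $\widehat{\mathcal{A}}(p\Z_p)$, and the formal logarithm gives an isomorphism $\widehat{\mathcal{A}}(p\Z_p)\simeq (p\Z_p)^g$, which is torsion-free. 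Hence $P$ has nontrivial reduction. (Alternatively, Raynaud's theorem for $e<p-1$ shows that any finite flat prolongation of $\Z/p\Z$ over $\Z_p$ is the constant one.) Once this point is inserted, your proof is complete and, as noted, is more self-contained than the paper's.
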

		
		\begin{proof}
			Since $A$ has good reduction at $p$, we have $c_p(A) = c_p(B) = 1$.  In the notation of \Cref{lem:schaefer}, we therefore have $c_p(\phi) = \gamma_{\phi, p}$. Now, the generator $P$ of $\ker(\phi)$ is the image of a rational cuspidal divisor under the map $J_0(N) \to A$ by \cite[Thm.\ B]{Emerton}.
			Thus, by \cite[II.11.11]{Mazur-Eisenstein}, $\phi$ extends to an \'etale isogeny of N\'eron models over $\Zp$. It follows that $\mathrm{Lie}(\mathcal{A}) \to \mathrm{Lie}(\mathcal{B})$ is an isomorphism and $\gamma_{\phi, p}=1$.
		\end{proof}
		
		\begin{lemma}\label{lem:N-tam-ratio}
			We have $c_N(\phi) = p^{-1}$.
		\end{lemma}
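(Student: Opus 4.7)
The plan is to combine Schaefer's formula (\Cref{lem:schaefer}) with Mazur's description of the component group of $J_0(N)$ at $N$, and then perform a snake-lemma chase on N\'eron models over $\Z_N$ to pin down $\Phi_{B,N}$. Since $N \ne p$, \Cref{lem:schaefer} gives $c_N(\phi) = c_N(B)/c_N(A)$, so the task reduces to showing $c_N(A) = p$ and $c_N(B) = 1$. By Mazur's results in \cite[\S II.11]{Mazur-Eisenstein}, $J_0(N)$ has purely toric reduction at $N$, every geometric component of $\mathcal{J}_{\F_N}$ is $\F_N$-rational, and the cuspidal subgroup $\mathcal{C}\subset J_0(N)(\Q)$---cyclic of order $n = (N-1)/\gcd(12,N-1)$ and generated by $(0)-(\infty)$---maps isomorphically onto $\Phi_{J_0(N),N}$ under specialization. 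These properties are inherited by $A$ and $B$.

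For $c_N(A)$, the optimality of $J_0(N)\twoheadrightarrow A$ yields a surjection $\Phi_{J_0(N),N}\twoheadrightarrow \Phi_{A,N}$, so $\Phi_{A,N}$ is cyclic, generated by the image of $P$ (which equals the image of the cuspidal generator by \cite[Thm.~B]{Emerton}). Since $P$ has order $p$ in $A(\Q)$, this image has order $1$ or $p$; I would then show it has order $p$ by invoking the Eisenstein-ideal content of \cite{Mazur-Eisenstein} and \cite{Emerton}, to the effect that the very existence of a rational $p$-torsion point on $A$ forces the $p$-part of $\Phi_{J_0(N),N}$ to survive the quotient. This gives $\Phi_{A,N}\cong \Z/p\Z$ and hence $c_N(A) = p$.

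For $c_N(B)$, note that $\ker\phi \cong \Z/p\Z$ is a constant \'etale group scheme over $\Z_N$ (as $p\ne N$ and $P$ is $\Q$-rational), so $\phi$ extends to an \'etale isogeny of N\'eron models $\mathcal{A}\to\mathcal{B}$. Inside $\mathcal{A}_{\F_N}$, the constant subgroup $\langle P\rangle$ maps isomorphically onto $\Phi_{A,N}\cong \Z/p\Z$, so it meets the torus $\mathcal{A}^0_{\F_N}$ trivially; consequently, the restriction of $\phi$ to identity components is an injection of tori of equal dimension, hence an isomorphism. Applying the snake lemma to $0\to \mathcal{A}^0_{\F_N}\to \mathcal{A}_{\F_N}\to \Phi_{A,N}\to 0$ and its analogue for $B$, with vertical maps induced by $\phi$, then yields $\phi_*\colon \Phi_{A,N}\to \Phi_{B,N}$ surjective with kernel $\langle P\rangle$; since $\Phi_{A,N} = \Z/p\Z$, we conclude $\Phi_{B,N} = 0$, so $c_N(B) = 1$ and $c_N(\phi) = p^{-1}$. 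The single non-formal step is the non-vanishing $\Phi_{A,N}[p]\ne 0$ above, which genuinely uses the Eisenstein-ideal structure of $\Phi_{J_0(N),N}$ as a Hecke module; the rest is a routine snake-lemma manipulation paralleling the $p=3$ computation of \cite[\S 6]{shnidmanRM}.
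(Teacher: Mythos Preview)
Your approach and the paper's are essentially the same: both hinge on Emerton's result that $P$ specializes to a non-identity component of $\Phi_{A,N}$, followed by a Tamagawa-ratio computation. The paper packages the second step as a citation of \cite[Prop.~5.1]{BFS} (after first verifying, via the Atkin--Lehner sign and \cite{ConradStein}, that $A$ has \emph{split} purely toric reduction), whereas you carry out the snake-lemma computation by hand---which is precisely what that proposition encodes.

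One correction, though: your intermediate claims $c_N(A)=p$ and $c_N(B)=1$ are not justified and need not hold. By \cite[Thm.~B]{Emerton} the specialization map is an isomorphism $A(\Q)_{\mathrm{tors}} \xrightarrow{\sim} \Phi_{A,N}$, so $\Phi_{A,N}$ is cyclic of order divisible by $p$ but possibly strictly larger (nothing in the setup rules out further torsion in $A(\Q)$, and the image of the cuspidal generator of $J_0(N)(\Q)_{\mathrm{tors}}$ need not coincide with $P$). Fortunately your snake-lemma argument only needs that $\bar P$ has exact order $p$ in $\Phi_{A,N}$---which \emph{is} immediate from Emerton's isomorphism---to conclude that $\phi_*\colon \Phi_{A,N}\to\Phi_{B,N}$ is surjective with kernel $\langle \bar P\rangle\cong\Z/p\Z$, hence $\#\Phi_{B,N}/\#\Phi_{A,N}=1/p$ without knowing either group individually. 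So the conclusion $c_N(\phi)=p^{-1}$ stands; you should just phrase the argument as computing the ratio directly. This also makes the ``non-formal step'' you flag entirely formal: Emerton's isomorphism gives $\bar P\neq 0$ in $\Phi_{A,N}$ at once, with no further Eisenstein-ideal analysis required.
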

		
		\begin{proof}
			The Atkin--Lehner operator $W_N$ acts on $A$ by $-1$. Indeed, $A$ belongs to the Eisenstein quotient of $J = J_0(N)$, which is itself a quotient of $J^-$, the maximal quotient of $J$ on which the Atkin--Lehner eigenvalue is $-1$ \cite{Mazur-Eisenstein}*{Prop.\ 17.10}. Hence, the global root number of $f$ is $+1$, and by \cite[Prop.\ 7.1]{ConradStein}, the abelian variety $A = A_f$ has split purely toric reduction.  Moreover, the order $p$ point $P$ reduces to a non-identity component of the special fiber of the N\'eron model of $A$ over $\Z_N$, since specialization induces an isomorphism from $A(\Q)_\mathrm{tors}$ to the component group $\Phi_A$ by \cite[Thm.\ B]{Emerton}. Thus, the lemma follows from \cite[Prop.\ 5.1]{BFS}. 
		\end{proof}
		
		We compute $c(\phi) = \prod_{\l\leq\infty}c_\l(\phi) = c_N(\phi)c_\infty(\phi) = p^{-2}$, as desired.
	\end{proof}
	
	\section{Applications to elliptic curves}\label{sec:elliptic-proof}
	
	\begin{proof}[Proof of Theorem $\ref{thm:elliptic}$]
		By a recent result of Smith \cite{Smith-thesis}*{Cor.\ 1.4}, we have
		\[\lim_{X\to\infty}\frac{\#\set{d : 0 < |d|\le X : \rk E_d(\Q) \leq 1}}{2X} =1,\]
		under the assumption that either $E[2](\Q) = 0$, or $E[2](\Q) = (\Z/2\Z)^2$ and $E$ does not admit a cyclic $4$-isogeny.  In our case, $E$ also admits a $p$-isogeny. If $E[2](\Q) = (\Z/2\Z)^2$, then $E$ cannot also admit a cyclic $4$-isogeny: otherwise, the isogeny class of $E$ would contain a cyclic $8p$-isogeny, but $Y_0(8p)(\Q) = \emptyset$ for $p \geq 3$ \cite[Thm.\ 1]{kenku}. Thus, $E$ satisfies the hypotheses of Smith's result, and by \Cref{prop:sha} and our assumption that $c(\phi)\ge p^2$, it follows that $\Sha(A_d)[p]\ne 0$ for $100\%$ of $d\in \Sigma^+$.
	\end{proof}
	
	It is natural to ask about the scope of \Cref{thm:elliptic}.  In this direction, we have:
	
	\begin{proposition}\label{prop:atleast3}
	Suppose that $p > 2$ and that $\phi \colon E \to E'$ is a $p$-isogeny of elliptic curves over $\Q$. Suppose also that $E$ has at least three primes, distinct from $p$, of multiplicative reduction. Then there exists an integer $d$ such that either $c(\phi_d) \geq p^2$ or $c(\widehat\phi_d) \geq p^2$.	In particular, the conclusion of Theorem $\ref{thm:elliptic}$ applies to at least one of $E$ or $E'$.
	\end{proposition}
	
	\begin{proof}
	    We apply \cite{Dokchitser-local}*{Table 1}. Let $\l_1, \l_2, \l_3$ be the primes of multiplicative reduction, and write $v_i$ for the corresponding $\l_i$-adic valuation. Let $j, j'$ be the $j$-invariants of $E$ and $E'$. Replacing $E, E'$ by quadratic twists, we may assume that all three primes have split multiplicative reduction. Moreover, we may further twist so that, at every other prime, $E$ has either good or additive reduction.
	    
	    After reordering and possibly replacing $\phi$ with its dual, we may assume that $v_i(j) = pv_i(j')$ for $i = 1,2$. Indeed, for each $i$, we either have $v_i(j) = pv_i(j')$ or $v_i(j') = pv_i(j)$, so, possibly replacing $\phi$ with its dual, the first option must happen for at least two primes.
	    
	    Twisting again by an integer $d$, such that $\l_3\mid d$ and $\br{\frac {d}{p}} = 1$ for all other primes of bad reduction, we may assume that $E, E'$ have additive, potentially multiplicative reduction at $\l_3$. Similarly, we may assume that $E[\phi](\R) = 0$. Hence, by \cite{Dokchitser-local}*{Table 1} and by \Cref{lem:schaefer}, we have $c_{\l_i}(\phi) = p$ for $i = 1,2$ and $c_{\l_3}(\phi)=1$. Moreover, as in \Cref{lem:infty}, we have $c_\infty(\phi) = 1$.
	    
	    Twisting at $p$ so that $E$ has additive reduction, we either have $c_{p}(\phi) = 1$ or $p$, again by \cite{Dokchitser-local}*{Table 1}. Finally, by construction, $A$ has good or additive reduction at all the other primes, so $c_\ell(\phi) = 1$ for all primes $\ell \nmid \ell_1\ell_2\ell_3p\infty$ \cite{Dokchitser-local}*{Table 1}. Putting everything together, we have $c(\phi) \ge p^2$.
	\end{proof}
	
    \Cref{prop:atleast3} shows that, in a certain natural sense, \Cref{thm:elliptic} applies to ``most'' twist families of elliptic curves $E/\Q$ with an isogeny of degree $p > 2$. To make this claim more precise, we first recall that for $p \notin \{3,5,7,13\}$, there are only finitely many $j$-invariants of elliptic curves with a $p$-isogeny \cite{Mazur-rational}. The modular curves $X_0(p)$ with $p \in \{3,5,7,13\}$ are all isomorphic to $\P^1$. Hence, there are infinitely many $j$-invariants of such elliptic curves over $\Q$. However, for any $p$ in this set, and for any $k \geq 1$, one can show that $100\%$ of rational points in $X_0(p)(\Q)$, ordered by height, have at least $k$ primes of potentially multiplicative reduction. We will not prove this here, but for arguments along these lines, see \cite{bkls2}.  
\section{An example: an elliptic curve with a $5$-isogeny}\label{sec:elliptic}

In this final section, we look at the example $N = 11$ and $p = 5$. In this case, $J_0(11)$ is an elliptic curve with a $5$-isogeny $\phi \colon J_0(11) \to E$ and we work out an explicit lower bound on the proportion of squarefree integers $d$ with $\Sha(E_d)[5] \neq 0$. 

In \cite{delaunay2}, Delaunay gives a Cohen--Lenstra type heuristic, which predicts that, for fixed $r \in \{0,1\}$, and as $E$ varies over all elliptic curves over $\Q$ ordered by conductor,
\[\Prob(\Sha(E)[p]\ne 0\mid \rk E =r ) = 1 - \prod_{k=1}^\infty (1-p^{1-r-2k}). \]
Assuming Goldfeld's conjecture that $50\%$ of elliptic curves have rank $0$ and $50\%$ have rank $1$, this distribution predicts that
\[\Prob(\Sha(E)[5]\ne 0) =1-\frac12\br{ \prod_{k=1}^\infty (1-5^{1-2k}) +  \prod_{k=1}^\infty (1-5^{-2k})}\approx 0.124132. \]
Delaunay's heuristics were presented for the family of all elliptic curves over $\Q$, however, it is natural to guess that they should hold in quadratic twist families as well, as is suggested by Delaunay \cite{Delaunay}*{Sec.\ 4} and Poonen--Rains \cite{PoonenRains}*{Rem.\ 1.9}. 

In the following example, we prove that the family of quadratic twists of $E$ \emph{do not} follow this distribution. In particular, in twist families of elliptic curves with a $p$-isogeny, our example shows that the distribution of the groups $\Sha(E)[p]$ must follow a different distribution. 

\begin{theorem}\label{thm:example}
Let $E \colon  y^2 + y = x^3 - x^2 - 7820x - 263580$, an elliptic curve of conductor $11$.  Then at least $11/48 \approx 22.9\%$ of squarefree integers $d$ satisfy $\Sha(E_d)[5] \neq 0$.  
\end{theorem}

\begin{proof}
The modular curve $X_0(11)$ is genus 1 and has model $E' \colon y^2 + y = x^3 - x^2 - 10x - 20$. We therefore have $E' \simeq J_0(11) \simeq A_f$, where $f$ is the unique weight two eigenform of level $\Gamma_0(11)$.  The torsion subgroup is order $5$ generated by the point $(5,5)$. The curve $E$ in the theorem is the quotient $\phi \colon E' \to E$ by the subgroup generated by $(5,5)$.  

Since $E[2](\Q) = 0$, by \cite{Smith-thesis}*{Cor.\ 1.4}, we have $\rk E_d(\Q) \le 1$ for $100\%$ of squarefree integers $d$. Hence, by \Cref{prop:sha}, $\Sha(E_d)[5] \neq 0$ whenever $c(\widehat\phi) \ge 5^2$, or equivalently, whenever $c(\phi) \le 5^{-2}$. 
    
    By Lemmas \ref{lem:infty}, \ref{lem:ell=p},  and \ref{lem:N-tam-ratio}, or by \cite{Dokchitser-local}*{Table 1}, we have $c_{11}(\phi) = \frac15$, $c_\infty(\phi) = \frac15$ and $c_5(\phi) = 1$.  Moreover, by \cite{Dokchitser-local}*{Table 1}, we have $c_5(\phi_d) = 1$ for all $d$. If $\ell \nmid 5\cdot 11\cdot \infty$, we have $c_\ell(\phi_d) = 1$ by \Cref{lem:boringprimes}.  Hence, if $\Sigma^+$ denotes the set of positive squarefree integers, such that $d\in \Z_{11}^{\times 2}$, then $c(\phi_d) \le 5^{-2}$ for all $d\in \Sigma^+$.
    
    As a subset of the set of squarefree integers, $\Sigma^+$ has relative density
    \[\frac12 \cdot \frac{5\cdot 11}{11^2-1} = \frac{11}{48}.\]
    Hence, at least $11/48$ of squarefree integers $d$ satisfy $\Sha(E_d)[5] \neq 0$.  
\end{proof}	
	
	The fact that Delaunay's heuristics for the distribution of the groups $\Sha(E_d)[p]$ should be modified in certain cases was already understood in \cite{bkls2}, which is one reason why the authors formulate \cite[Conj.\ 1]{bkls2} without specifying a conjectural proportion.  That paper contains several results which show that Delaunay's distribution does not always hold when the (isogeny class of the) elliptic curve admits at least two independent cyclic isogenies. Theorem \ref{thm:elliptic} shows that this phenomenon persists even in the presence of a single cyclic isogeny. It would be interesting to develop consistent heuristics which describe a conjectural distribution in all cases.
	
	\section*{Acknowledgments}
	The authors thank Manjul Bhargava, Brendan Creutz, and Robert Lemke Oliver for helpful conversations. They also thank the referees for their careful reading and helpful suggestions to improve the exposition. The first author was supported by the Israel Science Foundation (grant No.\ 2301/20). The second author was supported by an Emily Erskine Endowment Fund postdoctoral fellowship at The Hebrew University of Jerusalem, by the Israel Science Foundation (grant No.\ 1953/20), and by the Binational Science Foundation (grant No.\ 2018250).
	
	\bibliography{bibliography}
	\bibliographystyle{alpha} 
	
\end{document}